\title{\LARGE \bf
A structure exploiting SDP solver for robust controller synthesis
}
\author{Dennis Gramlich, Tobias Holicki, Carsten W. Scherer and Christian Ebenbauer
\thanks{Funded by Deutsche Forschungsgemeinschaft (DFG, German Research Foundation) under Germany's Excellence Strategy - EXC 2075 – 390740016. We acknowledge the support by the Stuttgart Center for Simulation Science (SimTech).}
\thanks{Dennis Gramlich and Christian Ebenbauer are with the Chair of Intelligent Control Systems,
        RWTH Aachen University,
        52074 Aachen, Germany
        {\tt\small \{dennis.gramlich,christian.ebenbauer\} @ic.rwth-aachen.de}}%
\thanks{Tobias Holicki and Carsten W. Scherer are with the Chair of Mathematical Systems Theory, University of Stuttgart, 70569 Stuttgart, Germany {\tt\small \{tobias.holicki,carsten.scherer\} @imng.uni-stuttgart.de}}%
}%
\begin{document}

\maketitle
\thispagestyle{empty}
\pagestyle{empty}

\begin{abstract}

In this paper, we revisit structure exploiting SDP solvers dedicated to the solution of Kalman-Yakubovic-Popov semi-definite programs (KYP-SDPs). These SDPs inherit their name from the KYP Lemma and they play a crucial role in e.g. robustness analysis, robust state feedback synthesis, and robust estimator synthesis for uncertain dynamical systems. Off-the-shelve SDP solvers require $O(n^6)$ arithmetic operations per Newton step to solve this class of problems, where $n$ is the state dimension of the dynamical system under consideration. Specialized solvers reduce this complexity to $O(n^3)$. However, existing specialized solvers do not include semi-definite constraints on the Lyapunov matrix, which is necessary for controller synthesis. In this paper, we show how to include such constraints in structure exploiting KYP-SDP solvers.

\end{abstract}

\section{INTRODUCTION}
\label{sec:1}

Let $\bbS^n$ denote the set of symmetric matrices of dimension $n$. In this work, we study optimization problems with semi-definite constraints of the form
\begin{subequations}
\label{eq:coreProblem}
\begin{align}
	&\minimize_{\lambda \in \bbR^p, P \in \bbS^n} ~~ c^\top \lambda - \trace (\Sigma P) \label{eq:cost}\\
	&\mathrm{s.t.} ~~
	\begin{pmatrix}
		A & B\\
		I & 0
	\end{pmatrix}^\top 
	\begin{pmatrix}
		0 & P\\
		P & 0
	\end{pmatrix}
	\begin{pmatrix}
		A & B\\
		I & 0
	\end{pmatrix}
	+
	\begin{pmatrix}
		Q(\lambda) & S(\lambda)\\
		S^\top (\lambda) & R(\lambda)
	\end{pmatrix}
	\prec 0, \label{eq:LMI}\\
	&\hspace{8mm} N(\lambda) \succ 0,\label{eq:MultiplierConstraint}\\
    &\hspace{8mm} P \succ 0, \label{eq:PositivityConstraint}
\end{align}
\end{subequations}
where $H(\lambda) := H_0 + \sum_{i = 1}^p \lambda_i H_i$ for $H \in \{N, Q, S, R\}$ are affine matrix valued functions of $\lambda$. The matrix parameters are chosen to be of compatible dimensions, i.e., $Q_i \in \bbS^n$, $S_i \in \bbR^{n\times m}$, $R_i \in \bbS^m$, $N_i \in \bbS^r$, $A \in \bbR^{n\times n}$, $B \in \bbR^{n\times m}$ and $c \in \bbR^p$ for $i = 0,\ldots, p$ and positive integers $n,m,p,r \in \bbN$. 
Finally, we assume that the matrix pair $(A,B)$ is controllable and that the matrix $\Sigma \in \bbS^n$ is positive semi-definite. Linear matrix inequalities of the form \eqref{eq:LMI} frequently appear in control and signal processing and are related to the celebrated KYP-Lemma \cite{Ran96}. A partial list of applications for the SDP \eqref{eq:coreProblem} includes robustness analysis, robust controller synthesis, and robust estimator synthesis for linear dynamical systems. Today, the solution of online SDPs, e.g., for online data-driven control, or the solution of large SDPs, e.g., for estimating the Lipschitz constant of recurrent neural networks, should be of particular interest.

In many scenarios where the SDP \eqref{eq:coreProblem} appears, the state dimension $n$ is larger than the number $p$ of multipliers $\lambda_i$. In this case, the $O(n^2)$ decision variables in the matrix variable $P$ typically dominate the computational effort for solving \eqref{eq:coreProblem}. With off-the-shelve barrier function methods, for example, the cost for one Newton step scales with $n^6$. The complexity in $n$ can be improved by exploiting the structure of KYP-SDPs. This is explored in \cite{balakrishnan1999efficient,parrilo2000structured,kao2001fast,hansson2001primal} for \eqref{eq:cost}-\eqref{eq:MultiplierConstraint} but without \eqref{eq:PositivityConstraint}. A notable example is \cite{wallin2004kypd,VanBal05}, which cleverly parameterizes the variables of the dual of \eqref{eq:cost}-\eqref{eq:MultiplierConstraint} and thereby reduces the cost for one Newton iteration to $O(n^4)$.

Further structure exploiting algorithms for \eqref{eq:cost}-\eqref{eq:MultiplierConstraint} include cutting plane methods \cite{wallin2005decomposition,wallin2008cutting,falkeborn2012decomposition,JaoPal19,abou2019oracle}. These methods optimize over $P$ in an inner loop, whereas cutting planes for $\lambda$ are constructed in an outer loop. Such a splitting approach enables a more efficient optimization over $P$, e.g., by solving Riccati equations. Consequently, cutting plane methods are effective when the number of variables $\lambda_i$ is very small, but according to \cite{KaoMeg07} probably less effective when this number is moderate.

Alternatively to optimizing over $P$ in \eqref{eq:coreProblem}, one can also approach \eqref{eq:cost}-\eqref{eq:MultiplierConstraint} by solving the equivalent frequency domain inequality. This is considered in \cite{LiuVan07}, where the frequency domain inequality, which involves an infinite number of semi-definite constraints, is solved using a sampling approach. The latter can reduce the computational effort for Newton iterations, but produces only a lower bound on the optimal value. For this reason, in \cite{KaoMeg07}, a barrier function for the frequency domain inequality over all frequencies is constructed. Evaluating this barrier function requires solving Riccati and Lyapunov equations in each inner loop iteration. In addition, \cite{KaoMeg07} differentiates through the Riccati and Lyapunov equation to enable efficient optimization also for moderate numbers of $\lambda_i$ using second-order optimization algorithms.

In the present paper, we extend the problem \eqref{eq:cost}-\eqref{eq:MultiplierConstraint} studied in the cited KYP-SDP literature, with the semi-definite constraint \eqref{eq:PositivityConstraint}. This constraint enables, for example, robust state-feedback synthesis. Methodologically, we employ a second-order optimization algorithm to minimize a barrier function relaxation of \eqref{eq:coreProblem} similarly to \cite{KaoMeg07}. To this end, our key step is introducing a convex barrier function for the existence of a solution to a Riccati equation.


\section{Problem statement}
\label{sec:2}

Since the problem \eqref{eq:coreProblem} can be expensive to solve by off-the-shelve SDP solvers, we study the alternative problem
\begin{align}
	\minimize_{\lambda \in \bbR^p} ~&~ c^\top \lambda - \trace \Sigma P_+(\lambda) \label{eq:alternativeProblem}\\
	\mathrm{s.t.} ~&~ N(\lambda) \succ 0, P_+(\lambda) \succ 0, 
    \lambda \in \calD. \nonumber
\end{align}
Here, the function $\lambda \mapsto P_+(\lambda)$ is defined as the anti-stabilizing solution $P$ of the Riccati equation $F(P,\lambda) = 0$, where $F$ is defined as
\begin{align}
	F(P,\lambda) = A^\top P + P A + Q - (PB + S) R^{-1} (PB + S)^\top
	\label{eq:Riccati}
\end{align}
and where we abbreviate $Q = Q(\lambda)$, $S = S(\lambda)$ and $R = R(\lambda)$. Further, $\calD$ is defined as the set of all $\lambda \in \bbR^p$ with $R(\lambda) \prec 0$ for which $F(\cdot, \lambda) = 0$ has an anti-stabilizing solution. This problem formulation is motivated by the following extended version of the KYP-Lemma \cite{scherer2020theory}.

\begin{lemma}
    \label{lem:KYP}
	Consider a fixed $\lambda \in \bbR^p$ and suppose that $\mathrm{eig}(A) \cap i\mathbb{R} = \emptyset$ and that $(A, B)$ is controllable. Then the following statements are equivalent.
	\begin{enumerate}
		\item $\begin{pmatrix}
		    (A - i\omega I)^{-1} B\\
                I
		\end{pmatrix}^*
            \begin{pmatrix}
		Q & S\\
		S^\top & R
	\end{pmatrix}
        \begin{pmatrix}
		    (A - i\omega I)^{-1} B\\
                I
		\end{pmatrix} \prec 0$ for all $\omega \in \bbR \cup \{\infty\} . 
        $
		\item There exist some symmetric $P$ satisfying \eqref{eq:LMI}.
		\item $R(\lambda)\prec 0$ and there exists $P \in \bbS^n$ with $F(P,\lambda) \prec 0$.
		\item $R(\lambda)\prec 0$ and the Riccati equation  $F(P,\lambda ) = 0$ has an anti-stabilizing solution $P_+(\lambda) \in \bbS^n$.
	\end{enumerate}
\end{lemma}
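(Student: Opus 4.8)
The plan is to establish the equivalences along the route $(1)\Leftrightarrow(2)\Leftrightarrow(3)\Leftrightarrow(4)$, separating the classical material from the genuinely Riccati-theoretic step $(3)\Leftrightarrow(4)$. Expanding the outer factors in \eqref{eq:LMI} shows that this constraint is the familiar KYP inequality $\begin{pmatrix} A^\top P + PA + Q & PB+S \\ B^\top P + S^\top & R\end{pmatrix}\prec 0$. With \eqref{eq:LMI} in this form, the equivalence $(1)\Leftrightarrow(2)$ is precisely the strict Kalman--Yakubovich--Popov lemma, and its hypotheses---controllability of $(A,B)$ and $\mathrm{eig}(A)\cap i\mathbb{R}=\emptyset$---are exactly those assumed here. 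I would therefore invoke it from \cite{Ran96,scherer2020theory} rather than reprove the nontrivial direction $(1)\Rightarrow(2)$.

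For $(2)\Leftrightarrow(3)$ I would argue by a Schur complement: the lower-right block of the KYP inequality forces $R\prec0$, and taking the Schur complement with respect to it turns the block inequality into $F(P,\lambda)\prec 0$ with $F$ as in \eqref{eq:Riccati}; conversely $R\prec0$ together with $F(P,\lambda)\prec0$ lets me reassemble the block matrix, so the two statements are equivalent. The implication $(4)\Rightarrow(3)$ is then a short perturbation argument. If $P_+$ is the anti-stabilizing solution, the closed-loop matrix $A_{cl}=A-BR^{-1}(P_+B+S)^\top$ is anti-stable, so there is $Y\succ0$ with $A_{cl}^\top Y+YA_{cl}\succ0$. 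Since $F$ is quadratic in $P$ and $F(P_+,\lambda)=0$, one computes $F(P_+-\epsilon Y,\lambda)=-\epsilon(A_{cl}^\top Y+YA_{cl})-\epsilon^2 YBR^{-1}B^\top Y$; because $R\prec0$ the quadratic correction is positive semi-definite and of order $\epsilon^2$, so for small $\epsilon>0$ the negative definite linear term dominates and $F(P_+-\epsilon Y,\lambda)\prec0$.

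The remaining implication $(3)\Rightarrow(4)$ is where I expect the real difficulty to lie. I would pass to the Hamiltonian matrix $H=\begin{pmatrix}\tilde A & -BR^{-1}B^\top\\ -\tilde Q & -\tilde A^\top\end{pmatrix}$ of $F(\cdot,\lambda)$, with $\tilde A=A-BR^{-1}S^\top$ and $\tilde Q=Q-SR^{-1}S^\top$, for which $H\begin{pmatrix} I\\ P\end{pmatrix}=\begin{pmatrix} I\\ P\end{pmatrix}A_{cl}$ holds whenever $F(P,\lambda)=0$. The crux is to show that $H$ has no eigenvalues on $i\mathbb{R}$: its imaginary-axis eigenvalues correspond exactly to the frequencies at which the Hermitian matrix in statement $(1)$ becomes singular (with $\omega=\infty$ covered by $R\prec0$), so the strict definiteness in $(1)$---available once $(3)\Rightarrow(1)$ is known through the chain above---excludes them; controllability of $(A,B)$ is what rules out degenerate imaginary modes in this correspondence.

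Granting this, the Hamiltonian symmetry of $H$ (the matrix $JH$ is symmetric for $J=\begin{pmatrix}0 & I\\ -I & 0\end{pmatrix}$) makes its spectrum invariant under $\mu\mapsto-\bar\mu$, so with no eigenvalues on $i\mathbb{R}$ exactly $n$ of them lie in the open right half-plane and the corresponding anti-stable invariant subspace is $n$-dimensional. I would then show, again using controllability, that this subspace is a graph $\mathrm{Im}\begin{pmatrix} I\\ P_+\end{pmatrix}$, that $P_+$ is symmetric because the subspace is $J$-neutral, and that $P_+$ solves $F(P_+,\lambda)=0$ with anti-stable closed loop, i.e.\ is the anti-stabilizing solution (which is then automatically unique). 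The two delicate points are thus the absence of imaginary-axis eigenvalues and the graph-subspace property; the symmetry and the anti-stabilizing property follow from the standard structure of Hamiltonian invariant subspaces.
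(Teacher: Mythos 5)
The paper does not actually prove Lemma~\ref{lem:KYP}; it imports it wholesale from \cite{scherer2020theory}, so there is no in-paper argument to compare against. Measured on its own terms, your outline follows the standard route and the parts you carry out are correct: the expansion of \eqref{eq:LMI} into the block KYP inequality, the Schur-complement equivalence $(2)\Leftrightarrow(3)$ (including that the $(2,2)$ block forces $R\prec 0$), and the perturbation $F(P_+-\epsilon Y,\lambda)=-\epsilon(A_{cl}^\top Y+YA_{cl})-\epsilon^2 YBR^{-1}B^\top Y$ for $(4)\Rightarrow(3)$ all check out; the last computation is essentially identical to the one the paper performs later in the proof of Lemma~\ref{lem:RiccatiFeasibility}, part 2 (with the stationarity $F(P_+,\lambda)=0$ killing the zeroth-order term and the $O(\epsilon)$ term dominating the indefinite-sign $O(\epsilon^2)$ term).

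The one place where your proposal stops short of a proof is $(3)\Rightarrow(4)$, which is the only genuinely nontrivial content of the lemma for the purposes of this paper (recall the authors only ever use statement 4). You correctly identify the two load-bearing sub-claims --- that the Hamiltonian $H$ has no imaginary eigenvalues, and that its $n$-dimensional anti-stable invariant subspace is a graph $\mathrm{Im}\bigl(\begin{smallmatrix}I\\P_+\end{smallmatrix}\bigr)$ --- and your identity $H\bigl(\begin{smallmatrix}I\\P\end{smallmatrix}\bigr)=\bigl(\begin{smallmatrix}I\\P\end{smallmatrix}\bigr)A_{cl}$ is right, but both sub-claims are announced rather than established. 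The graph-subspace step in particular is where controllability of $(A,B)$ must actually be spent (one shows $\ker X$ of a basis $\bigl(\begin{smallmatrix}X\\Y\end{smallmatrix}\bigr)$ is an $A$-invariant subspace on which controllability fails), and the $J$-Lagrangian property giving symmetry of $P_+$ also deserves a line. A complete write-up should either execute these steps or do what the paper implicitly does and cite them: the equivalence ``$\exists P:\,F(P,\lambda)\prec 0$ iff $P_-$ and $P_+$ exist'' under $R\prec0$ and controllability is exactly what the authors later quote from \cite{scherer1991solution} in Lemma~\ref{lem:RiccatiFeasibility}. Two further small caveats: your plan for excluding imaginary eigenvalues of $H$ routes through statement $(1)$, which is fine and non-circular since $(3)\Rightarrow(2)\Rightarrow(1)$ is already available, but note that condition $(1)$ as printed uses $(A-i\omega I)^{-1}B$ rather than the conventional $(i\omega I-A)^{-1}B$; the two differ by a sign that flips the cross terms with $S$, so your appeal to ``the standard strict KYP lemma'' for $(1)\Leftrightarrow(2)$ silently assumes the usual convention rather than the literal statement. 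This is almost certainly a typo in the paper, but it is worth flagging before leaning on $(1)$ inside your own argument.
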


We mention that \cite{KaoMeg07,LiuVan07} are based on 1), whereas we utilize 4).

In the course of our exposition, we show that our formulation \eqref{eq:alternativeProblem} provides the same numerical advantages as \cite{KaoMeg07}, but additionally allows us to consider the constraint \eqref{eq:PositivityConstraint} and the cost term $- \trace \Sigma P$. The challenge is to handle the constraint $\lambda \in \calD$, i.e., the feasibility of the Riccati equation, and the nonlinear function $\lambda \mapsto P_+(\lambda)$. We address these challenges by deriving a convex barrier function for the feasibility of the Riccati equation and by showing that the mapping $\lambda \mapsto P_+(\lambda)$ is concave (in the sense of Hermitian valued functions).

We conclude the section with an equivalence theorem for \eqref{eq:coreProblem} and \eqref{eq:alternativeProblem} which is proven in  Section \ref{sec:4}.

\begin{theorem}[Equivalence of \eqref{eq:coreProblem} and \eqref{eq:alternativeProblem}]
	\label{thm:equivalence}
	Problem \eqref{eq:coreProblem} and problem \eqref{eq:alternativeProblem} are equivalent, i.e., the optimal values coincide and $\lambda \in \bbR^p$ is feasible for \eqref{eq:alternativeProblem} if and only if there exists $P \in \bbS^n$ such that $(\lambda,P)$ is feasible for \eqref{eq:coreProblem}.
 \end{theorem}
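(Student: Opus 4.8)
The plan is to handle, for a fixed $\lambda$, the inner optimization over $P$ in \eqref{eq:coreProblem} and to show that it is, up to the boundary, solved by the anti-stabilizing solution $P_+(\lambda)$. First I would rewrite the constraint \eqref{eq:LMI}: carrying out the matrix product turns it into $\begin{pmatrix} A^\top P + PA + Q & PB+S\\ (PB+S)^\top & R\end{pmatrix}\prec 0$, and applying the Schur complement with respect to the (necessarily negative definite) lower-right block shows that, for a fixed $P$, \eqref{eq:LMI} holds if and only if $R(\lambda)\prec 0$ and $F(P,\lambda)\prec 0$ with $F$ as in \eqref{eq:Riccati}. This is exactly the passage between statements 2) and 3) of Lemma \ref{lem:KYP}, and combined with 3)$\Leftrightarrow$4) it lets me translate the existence of a $P$ satisfying \eqref{eq:LMI} into membership $\lambda\in\calD$, i.e.\ the existence of $P_+(\lambda)$.

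The second ingredient is the maximality of the anti-stabilizing solution: $P_+(\lambda)$ is the largest symmetric matrix with $F(P,\lambda)\preceq 0$. I would establish this from the standard Riccati difference identity: writing $\Delta=P-P_+(\lambda)$ and $A_{cl}=A-BR^{-1}(P_+(\lambda)B+S)^\top$ one obtains $A_{cl}^\top\Delta+\Delta A_{cl}-\Delta BR^{-1}B^\top\Delta=F(P,\lambda)\preceq 0$. Since $-R^{-1}\succ 0$ the quadratic term is $\preceq 0$, so $A_{cl}^\top\Delta+\Delta A_{cl}\preceq 0$, and because $A_{cl}$ is anti-stable a Lyapunov argument forces $\Delta\preceq 0$, i.e.\ $P\preceq P_+(\lambda)$.

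With these facts the feasibility equivalence follows. If $(\lambda,P)$ is feasible for \eqref{eq:coreProblem}, then $R(\lambda)\prec 0$ and $F(P,\lambda)\prec 0$, so $\lambda\in\calD$ by Lemma \ref{lem:KYP}, while maximality gives $P_+(\lambda)\succeq P\succ 0$; together with $N(\lambda)\succ 0$ this is feasibility for \eqref{eq:alternativeProblem}. Conversely, if $\lambda$ is feasible for \eqref{eq:alternativeProblem}, I would perturb $P_+(\lambda)$ inward: choosing $W\succ 0$ with $A_{cl}^\top W+WA_{cl}\succ 0$ (solvable since $A_{cl}$ is anti-stable) and setting $P_\varepsilon=P_+(\lambda)-\varepsilon W$, the same identity yields $F(P_\varepsilon,\lambda)=-\varepsilon(A_{cl}^\top W+WA_{cl})+O(\varepsilon^2)\prec 0$ for small $\varepsilon>0$, while $P_\varepsilon\succ 0$ because $P_+(\lambda)\succ 0$; together with $R(\lambda)\prec 0$ and the Schur complement this makes $(\lambda,P_\varepsilon)$ feasible for \eqref{eq:coreProblem}.

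Finally, for the optimal values I would fix a feasible $\lambda$ and minimize only $-\trace(\Sigma P)$ over the feasible $P$. Since $\Sigma\succeq 0$ and $P\preceq P_+(\lambda)$ for every feasible $P$, we get $\trace(\Sigma P)\le\trace(\Sigma P_+(\lambda))$, so the inner value is bounded below by $c^\top\lambda-\trace(\Sigma P_+(\lambda))$, the objective of \eqref{eq:alternativeProblem}; the family $P_\varepsilon$ shows this bound is approached as $\varepsilon\to 0$, hence it is the exact inner value. Since the feasible $\lambda$-sets of the two problems coincide by the previous paragraph, taking the infimum over feasible $\lambda$ yields equality of the optimal values. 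The main obstacle I anticipate is the maximality/comparison step combined with the fact that $P_+(\lambda)$ lies on the boundary of the strictly feasible set \eqref{eq:LMI}: the infimum in \eqref{eq:coreProblem} need not be attained, so the inward perturbation is essential to certify that $\trace(\Sigma P_+(\lambda))$ is the supremal, not merely an upper bound, value. A secondary point to verify is that invoking the equivalences 2)--4) of Lemma \ref{lem:KYP} is legitimate under the assumption $\mathrm{eig}(A)\cap i\bbR=\emptyset$, which I would carry as a standing hypothesis of the setting.
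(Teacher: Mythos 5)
Your proposal is correct and follows essentially the same route as the paper: both directions hinge on the maximality of the anti-stabilizing solution ($F(P,\lambda)\preceq 0 \Rightarrow P\preceq P_+$, which the paper imports as fact 1) of Lemma \ref{lem:RiccatiFeasibility} and you re-derive via the standard Riccati difference identity) and on the inward perturbation $P_+-\varepsilon W$ with $W$ solving an anti-stable Lyapunov equation, which is exactly the construction in the proof of fact 2) of that lemma. The treatment of the objective via $\Sigma\succeq 0$ and the non-attained supremum is likewise identical to the paper's argument.
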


\section{An interior point method for \eqref{eq:alternativeProblem}}


To solve \eqref{eq:alternativeProblem}, we propose the employment of a path-following barrier method similar to \cite{KaoMeg07}. For this purpose, a barrier function for the constraint $\lambda \in \calD$ is given by $\lambda \mapsto -\log\det (-R(\lambda)) -\log \det \Delta (\lambda)$ where $\Delta (\lambda) := P_+ (\lambda) - P_- (\lambda)$ is the difference between the stabilizing solution $P_-(\lambda)$ and the anti-stabilizing solution $P_+(\lambda)$ of the Riccati equation. This fact is proven in Section \ref{sec:4}.
For the remaining semi-definite constraints, we utilize the standard $\log \det$ barrier function. Overall, for an increasing sequence of $t$, we minimize
\begin{align}
    v_t(\lambda) &=t(c^\top \lambda - \trace \Sigma P_+(\lambda)) - \log \det N(\lambda)  \label{eq:v_t}\\
    &- \log \det P_+(\lambda)
    - \log \det (- R(\lambda)) - \log \det \Delta(\lambda)  \nonumber
\end{align}
as a function of $\lambda$.
To solve this optimization problem, we need to determine first- and second-order derivatives of the solutions $P_+(\lambda)$, $P_-(\lambda)$ of the Riccati equation $F(P,\lambda) = 0$. To simplify the notation we drop the argument $\lambda$ in our matrix-valued functions sometimes.

\begin{theorem}
	\label{thm:riccatiDifferentiation}	
 
    Given $\lambda_0 \in \bbR^p$ and $P_0 \in \bbS^n$ with $F(P_0,\lambda_0) = 0$, if $A - BK$ has no eigenvalues on the imaginary axis, where $K := R^{-1}(P_0B + S)^\top$, then there exist a neighbourhood $\calN$ of $\lambda_0$ and an arbitrarily often differentiable function $P: \calN \to \bbS^n$ with $P(\lambda_0) = P_0$, such that $F(P(\lambda),\lambda) = 0$ for all $\lambda \in \calN$. Moreover, the partial derivative $\partial_{\lambda_i} P$ is the solution of the Lyapunov equation
	\begin{align}
			0 = \partial_{\lambda_i}P (A - B K) &+ (A - B K)^\top \partial_{\lambda_i} P \nonumber\\
			& +
			\begin{pmatrix}
				I\\
				-K
			\end{pmatrix}^\top
			\begin{pmatrix}
				Q_i & S_i\\
				S_i^\top & R_i
			\end{pmatrix}
			\begin{pmatrix}
				I\\
				-K
			\end{pmatrix}.\label{eq:RiccatiFirstOrder}
	\end{align}
	Furthermore, the second order partial derivative $\partial_{\lambda_i} \partial_{\lambda_j} P$ is the unique solution of the Lyapunov equation
	\begin{align}
		0 = (A - B K)^\top \partial_{\lambda_i} \partial_{\lambda_j} P + \partial_{\lambda_i} \partial_{\lambda_j}P (A - B K)\nonumber\\
		- \partial_{\lambda_j} K^\top R \partial_{\lambda_i} K - \partial_{\lambda_i} K^\top R \partial_{\lambda_j} K, \label{eq:RiccatiSecondOrder}
	\end{align}
	where $\partial_{\lambda_i} K := R^{-1}(B^\top \partial_{\lambda_i} P + S_i^\top - R_i K)$.
\end{theorem}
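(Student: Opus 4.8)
The plan is to obtain the function $P$ from the Implicit Function Theorem (IFT) applied to the identity $F(P,\lambda)=0$, and then to read off the two derivative formulas by differentiating this identity. First I would note that, since $K=R^{-1}(P_0B+S)^\top$ is well defined, $R(\lambda_0)$ is invertible; hence $R(\lambda)$ is invertible on a neighbourhood of $\lambda_0$ and $F$ from \eqref{eq:Riccati} is $C^\infty$ there. The decisive ingredient is the partial Fréchet derivative of $F$ with respect to $P$ at $(P_0,\lambda_0)$. A direct computation, using $(P_0B+S)^\top=RK$ and hence $(P_0B+S)R^{-1}=K^\top$, collapses the quadratic term and yields the Lyapunov operator $\mathcal{L}(X)=(A-BK)^\top X+X(A-BK)$; that is, $D_PF(P_0,\lambda_0)[X]=\mathcal{L}(X)$ for $X\in\bbS^n$.

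The main obstacle is to show that $\mathcal{L}$ is bijective on $\bbS^n$, which is exactly the hypothesis of the IFT and also what delivers the two uniqueness claims. The operator $\mathcal{L}$ is invertible if and only if $A-BK$ has no two eigenvalues $\mu,\nu$ with $\mu+\nu=0$. The assumption that $A-BK$ has no imaginary eigenvalues rules out the eigenvalue $0$ and conjugate imaginary pairs $\{i\omega,-i\omega\}$; the remaining danger is a pair $\{\mu,-\mu\}$ with $\mathrm{Re}(\mu)\neq0$. I would exclude this using the Hamiltonian structure of \eqref{eq:Riccati}: the graph of the symmetric solution $P_0$ is a Lagrangian invariant subspace of the associated Hamiltonian matrix, whose spectrum is symmetric about the imaginary axis, and such a symmetric solution selects one eigenvalue from each symmetric pair, so no two eigenvalues of $A-BK$ sum to zero. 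For the anti-stabilizing and stabilizing solutions $P_\pm$ actually used in the paper this is immediate, since then $\mathrm{spec}(A-BK)$ lies entirely in one open half-plane. With $\mathcal{L}$ invertible, the IFT yields a neighbourhood $\calN$ of $\lambda_0$ and a $C^\infty$ map $P\colon\calN\to\bbS^n$ with $P(\lambda_0)=P_0$ and $F(P(\lambda),\lambda)=0$.

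For the first derivative I would differentiate $F(P(\lambda),\lambda)=0$ with respect to $\lambda_i$ via the chain rule, obtaining $\mathcal{L}(\partial_{\lambda_i}P)+\partial_{\lambda_i}F|_{P\ \mathrm{fixed}}=0$. Computing the partial derivative of $F$ at fixed $P$ and substituting $R^{-1}(P_0B+S)^\top=K$, $(P_0B+S)R^{-1}=K^\top$ and $\partial_{\lambda_i}(R^{-1})=-R^{-1}R_iR^{-1}$ produces the matrix $Q_i-S_iK-K^\top S_i^\top+K^\top R_iK$, which is precisely the congruence $\binom{I}{-K}^\top\big(\begin{smallmatrix}Q_i&S_i\\ S_i^\top&R_i\end{smallmatrix}\big)\binom{I}{-K}$ appearing in \eqref{eq:RiccatiFirstOrder}; uniqueness of $\partial_{\lambda_i}P$ then follows from the invertibility of $\mathcal{L}$.

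For the second derivative I would differentiate \eqref{eq:RiccatiFirstOrder} once more with respect to $\lambda_j$, now treating $K=K(\lambda)$ and $A-BK$ as $\lambda$-dependent, with $\partial_{\lambda_j}(A-BK)=-B\,\partial_{\lambda_j}K$ and the stated formula $\partial_{\lambda_i}K=R^{-1}(B^\top\partial_{\lambda_i}P+S_i^\top-R_iK)$, which I would first verify by differentiating $K=R^{-1}(B^\top P+S^\top)$. Collecting terms, the Lyapunov part reproduces $(A-BK)^\top\partial_{\lambda_i}\partial_{\lambda_j}P+\partial_{\lambda_i}\partial_{\lambda_j}P(A-BK)$, and I expect the remaining cross terms — those coming from $\partial_{\lambda_j}(A-BK)$ acting on $\partial_{\lambda_i}P$ and from differentiating the congruence block — to collapse, after substituting $R\,\partial_{\lambda_i}K=B^\top\partial_{\lambda_i}P+S_i^\top-R_iK$, exactly into $-\partial_{\lambda_j}K^\top R\,\partial_{\lambda_i}K-\partial_{\lambda_i}K^\top R\,\partial_{\lambda_j}K$, giving \eqref{eq:RiccatiSecondOrder}; uniqueness is again inherited from $\mathcal{L}$. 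I expect the two genuine difficulties to be the invertibility argument of the second paragraph and the term-matching bookkeeping in this last step, the former being the conceptual crux and the latter purely mechanical.
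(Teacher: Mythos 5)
The paper does not actually prove Theorem~\ref{thm:riccatiDifferentiation}; it defers to the cited references. Your overall route --- implicit function theorem with the Fr\'echet derivative $D_PF(P_0,\lambda_0)[X]=(A-BK)^\top X+X(A-BK)$, then differentiating the identity $F(P(\lambda),\lambda)=0$ once and twice --- is the standard one, and your derivative bookkeeping is sound: writing $F=A^\top P+PA+Q-K^\top RK$ with $RK=B^\top P+S^\top$ and substituting $R\,\partial_{\lambda_i}K=B^\top\partial_{\lambda_i}P+S_i^\top-R_iK$ does collapse everything to \eqref{eq:RiccatiFirstOrder} and \eqref{eq:RiccatiSecondOrder}.

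The genuine gap is in your second paragraph. The claim that the Lagrangian graph subspace of a symmetric solution ``selects one eigenvalue from each symmetric pair, so no two eigenvalues of $A-BK$ sum to zero'' is false once the Hamiltonian matrix has repeated eigenvalues off the imaginary axis: the pairing argument via nondegeneracy of $v^\top Jw$ on $E_\mu\oplus E_{-\mu}$ only forbids taking \emph{both} members of a pair when the eigenspaces are one-dimensional. Concretely, take $n=2$, $A=\mathrm{diag}(1,-1)$, $B=I$, $S=0$, $R=-I$, $Q(\lambda)=\lambda_1\bigl(\begin{smallmatrix}0&1\\1&0\end{smallmatrix}\bigr)$, $\lambda_0=0$, $P_0=0$. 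Then $(A,B)$ is controllable, $F(P_0,\lambda_0)=0$, $K=0$, and $A-BK=A$ has no imaginary eigenvalues, yet its eigenvalues $1$ and $-1$ sum to zero and $\mathcal{L}$ is singular on $\bbS^2$ (its range is the diagonal matrices). Writing $P=\bigl(\begin{smallmatrix}a&b\\b&c\end{smallmatrix}\bigr)$, the diagonal entries of $F(P,\lambda)=0$ force $a+c=0$ on the branch through $P_0$, contradicting the off-diagonal entry $b(a+c)+\lambda_1=0$ for $\lambda_1\neq0$; so no smooth solution family through $P_0$ exists, and in fact the theorem fails in the literal generality in which it is stated (its hypothesis is strictly weaker than invertibility of $\mathcal{L}$). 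Your parenthetical fallback is the correct fix and should be the main argument: for the solutions $P_\pm$ that the paper actually differentiates, $\mathrm{eig}(A-BK)$ lies in a single open half-plane, so no two eigenvalues sum to zero, $\mathcal{L}$ is bijective, and the rest of your proof goes through; equivalently, the hypothesis should be strengthened to $\mathrm{eig}(A-BK)\cap\mathrm{eig}(-(A-BK))=\emptyset$.
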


For the proof, we refer to \cite{KaoMeg07,curtain2010analytic}.

Theorem \ref{thm:riccatiDifferentiation} enables us to differentiate the solutions $P_+(\cdot)$ and $P_-(\cdot)$ of the Riccati equation. As a consequence, we can formulate the path-following interior point method Algorithm \ref{alg:1} for solving \eqref{eq:alternativeProblem}. Derivatives of the barrier functions are derived using standard formulas and provided in Algorithm \ref{alg:1}.

\begin{algorithm}
	\caption{Solver for \eqref{eq:alternativeProblem}}\label{alg:1}
	\begin{algorithmic}
		\STATE \textbf{Input:} $\varepsilon,t_{\max}$, initial feasible point $\lambda$ of \eqref{eq:alternativeProblem}.
		\WHILE{$t \leq t_{\max}$}
		\STATE $P_- \gets$ stabilizing solution of $F(P,\lambda) = 0$
		\STATE $P_+ \gets$ anti-stabilizing solution of $F(P,\lambda) = 0$
            \STATE $(Q,S,R,N) \gets (Q(\lambda),S(\lambda),R(\lambda),N(\lambda))$
		\STATE $v_t \gets - \log \det P_+ - \log \det (-R) - \log \det N$
            \STATE ~~~~~ $- \log \det \Delta  + t(c^\top \lambda - \trace \Sigma P_+)$
            \STATE $(\nabla v_t)_i \gets -\trace \Delta^{-1} \partial_{\lambda_i} \Delta + t (c_i - \trace \Sigma \partial_{\lambda_i} P_+)$\\
            \STATE ~~~~~ $-\trace P_+^{-1} \partial_{\lambda_i} P_+ -\trace R^{-1}R_i - \trace N^{-1}N_i$
            \STATE $(H_{v_t})_{ij} \gets \trace P_+^{-1} (2\partial_{\lambda_i} P_+ P_+^{-1} \partial_{\lambda_j} P_+ - \partial_{\lambda_i} \partial_{\lambda_j} P_+)$
            \STATE ~~~ $ + 2\trace R_i R^{-1} R_j R^{-1} + 2\trace N_i  N^{-1} N_j N^{-1}$
            \STATE ~~~ $ + \trace \Delta^{-1}(2\partial_{\lambda_i} \Delta \Delta^{-1} \partial_{\lambda_j} \Delta - \partial_{\lambda_i} \partial_{\lambda_j} \Delta)$
            \STATE ~~~ $ - t \trace \Sigma \partial_{\lambda_i} \partial_{\lambda_j} P_+$
            \STATE $d \gets -H_{v_t}^{-1} \nabla v_t$ (Newton search direction)
		\STATE $\alpha \gets $ line search for $\argmin_{\alpha} v_t(\lambda + \alpha d)$
		\STATE $\lambda \gets \lambda + \alpha d$
            \STATE \textbf{if} (stopping criterion) \textbf{then} $t \gets 10t$
		\ENDWHILE
		\RETURN $(P_+,\lambda)$
	\end{algorithmic}
\end{algorithm}

\begin{remark}[Initial feasible points]
    \label{rem:initalFeasible}
    To generate an initial interior point for Algorithm \ref{alg:1}, we apply a standard procedure found in \cite{boyd2004convex} and extend our decision variable to $\tilde{\lambda} := \begin{pmatrix}
	\lambda_0 & \lambda_1 & \cdots & \lambda_p
\end{pmatrix}^\top$, the multiplier matrix to
\begin{align*}
	\begin{pmatrix}
		\widetilde{Q}(\tilde{\lambda}) & \widetilde{S}(\tilde{\lambda})\\
		\widetilde{S}(\tilde{\lambda})^\top & \widetilde{R}(\tilde{\lambda})
	\end{pmatrix}
	:=
	\begin{pmatrix}
		Q(\lambda) & S(\lambda)\\
		S(\lambda)^\top & R(\lambda)
	\end{pmatrix}
	-
	\lambda_0
	\begin{pmatrix}
		I & 0\\
		0 & I
	\end{pmatrix},
\end{align*}
and $N$ to $\widetilde{N}(\tilde{\lambda}) := N(\lambda) + \lambda_0 I$. Then $\tilde{\lambda}$ is an interior point for the modified problem \eqref{eq:coreProblem} with $\widetilde{Q}(\cdot), \widetilde{R}(\cdot), \widetilde{S}(\cdot), \widetilde{N}(\cdot)$ replacing $Q(\cdot), R(\cdot), S(\cdot), N(\cdot)$ if $\lambda_0$ is sufficiently large. An interior point for the original problem can thus be found by minimizing $\lambda_0$ as the objective for the modified problem. If the minimum of this auxiliary problem is larger than zero, then the original problem is infeasible.
\end{remark}

\section{Convexity and equivalence result of the reformulation}
\label{sec:4}

Algorithm \ref{alg:1} relies on the equivalence of \eqref{eq:coreProblem} and \eqref{eq:alternativeProblem}, and the fact that \eqref{eq:v_t} is a convex barrier function. We prove this fact in this section. 

\begin{lemma}
    \label{lem:RiccatiFeasibility}
    Suppose $R \prec 0$. Then $P\in \bbS^n$ with $F(P,\lambda) \prec 0$ exists if and only if $P_+$ and $P_-$ exist. If this is the case, then the following facts are true:
    \begin{enumerate}
        \item $\forall P \in \bbS^n: F(P,\lambda) \preceq 0 \Rightarrow P_- \preceq P \preceq P_+$,
        \item $\forall \varepsilon > 0 \exists P \in \bbS^n : F(P,\lambda) \prec 0$ and $ P_+ - \varepsilon I \prec P \prec P_+$,
        \item $\Delta \succ 0$.
    \end{enumerate}
\end{lemma}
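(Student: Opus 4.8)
The backbone of the whole argument is a single completing-the-square identity. Writing $M := B R^{-1} B^\top$, $\tilde A := A - B R^{-1} S^\top$ and $\tilde Q := Q - S R^{-1} S^\top$, the Riccati operator reads $F(P) = \tilde A^\top P + P \tilde A - P M P + \tilde Q$, and since $R \prec 0$ we have $M \preceq 0$. For any $P_\star \in \bbS^n$ with closed-loop matrix $A_\star := \tilde A - M P_\star$ one checks directly that
\[
F(P) - F(P_\star) = A_\star^\top X + X A_\star - X M X, \qquad X := P - P_\star .
\]
When $P_\star$ is a solution, $A_\star$ coincides with the matrix $A - B K$ of Theorem \ref{thm:riccatiDifferentiation}; writing $A_\pm := \tilde A - M P_\pm$, the matrix $A_+$ is anti-stable and $A_-$ is Hurwitz. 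I would isolate this identity first, because facts 1)--3) all follow by feeding different $P$ into it and reading off the sign via Lyapunov theory.

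For the equivalence ``$F(P) \prec 0$ solvable $\iff$ $P_\pm$ exist'', the direction ($\Leftarrow$) is immediate: existence of $P_+$ is exactly item 4) of Lemma \ref{lem:KYP}, which gives item 3), i.e.\ a strict solution. For ($\Rightarrow$) it suffices, again by Lemma \ref{lem:KYP}, to produce $P_-$ once $P_+$ is known to exist. Here I would pass to the Hamiltonian $\calH := \left(\begin{smallmatrix} \tilde A & -M \\ -\tilde Q & -\tilde A^\top \end{smallmatrix}\right)$. The similarity $\left(\begin{smallmatrix} I & 0 \\ -P_+ & I\end{smallmatrix}\right) \calH \left(\begin{smallmatrix} I & 0 \\ P_+ & I\end{smallmatrix}\right) = \left(\begin{smallmatrix} A_+ & -M \\ 0 & -A_+^\top\end{smallmatrix}\right)$ shows $\mathrm{spec}(\calH) = \mathrm{spec}(A_+) \cup \mathrm{spec}(-A_+^\top)$, which avoids the imaginary axis because $A_+$ is anti-stable. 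Since $\calH$ is Hamiltonian its spectrum is symmetric about $i\bbR$, so it splits into $n$ stable and $n$ anti-stable eigenvalues, and controllability of $(A,B)$ guarantees that the stable invariant subspace is a graph and hence yields the stabilizing solution $P_-$. This existence step is the main obstacle, being the only place that genuinely needs invariant-subspace/Hamiltonian theory rather than elementary manipulation; I would cite \cite{scherer2020theory} for the standard facts.

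Assume now $P_\pm$ exist. For fact 1), take $P$ with $F(P) \preceq 0$ and apply the identity with $P_\star = P_+$: since $F(P) - F(P_+) = F(P) \preceq 0$ and $X M X \preceq 0$, one obtains $A_+^\top X + X A_+ \preceq X M X \preceq 0$ with $X = P - P_+$. As $A_+$ is anti-stable, the Lyapunov inequality forces $X \preceq 0$, i.e.\ $P \preceq P_+$. Running the same computation around the Hurwitz matrix $A_-$ gives $A_-^\top Y + Y A_- \preceq 0$ for $Y = P - P_-$, hence $Y \succeq 0$ and $P \succeq P_-$. For fact 2), I would perturb $P_+$ downward: pick $Z \succ 0$ solving $A_+^\top Z + Z A_+ = I$ (solvable since $A_+$ is anti-stable) and set $P = P_+ - \delta Z$. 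The identity yields $F(P) = -\delta I - \delta^2 Z M Z$, which is $\prec 0$ for all small $\delta > 0$, while simultaneously $P \prec P_+$ and, shrinking $\delta$ further, $P_+ - \varepsilon I \prec P$.

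Finally, fact 3) follows from facts already in hand plus one spectral observation. Taking $P = P_-$ in fact 1) yields $\Delta = P_+ - P_- \succeq 0$. For strictness I would show $\ker \Delta = \{0\}$: the identity around $P_-$ evaluated at $P_+$ gives $A_-^\top \Delta + \Delta A_- = \Delta M \Delta$, so for $v \in \ker \Delta$ one reads off $\Delta A_- v = 0$, i.e.\ $\ker \Delta$ is $A_-$-invariant; since $A_+ = A_- - M \Delta$, the same subspace is $A_+$-invariant and $A_+|_{\ker\Delta} = A_-|_{\ker\Delta}$. A nonzero such subspace would carry an eigenvalue lying simultaneously in the open left half-plane (as $A_-$ is Hurwitz) and the open right half-plane (as $A_+$ is anti-stable), which is impossible. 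Hence $\ker \Delta = \{0\}$, and together with $\Delta \succeq 0$ this gives $\Delta \succ 0$.
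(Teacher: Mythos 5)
Your proof is correct, and for fact 2) it is essentially identical to the paper's: both perturb $P_+$ along the solution $H$ (your $Z$) of the Lyapunov equation $(A-BK_+)^\top H + H(A-BK_+)=I$ and read off $F(P_+-\varepsilon H)=-\varepsilon I - \varepsilon^2 HBR^{-1}B^\top H \prec 0$ for small $\varepsilon$. Where you diverge is that you make self-contained what the paper outsources: the paper disposes of the existence equivalence and of fact 1) with a citation to Scherer's 1991 work, whereas you prove fact 1) directly from the completing-the-square identity $F(P)-F(P_\star)=A_\star^\top X + XA_\star - XMX$ plus the sign of solutions of Lyapunov inequalities for (anti-)stable matrices, and you sketch the existence of $P_-$ from $P_+$ via the Hamiltonian similarity and the graph-subspace property of the stable invariant subspace (the one step you still defer to a reference, reasonably, since that is the genuinely nontrivial invariant-subspace fact). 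For fact 3) the two routes also differ: the paper gets $\Delta \succ 0$ for free by combining 2) with 1), namely $P_- \preceq P \prec P_+$ for the perturbed $P$, while you argue $\Delta \succeq 0$ from 1) and then kill $\ker\Delta$ by showing it is simultaneously $A_-$- and $A_+$-invariant with $A_+|_{\ker\Delta}=A_-|_{\ker\Delta}$, which is impossible for a nonzero subspace since the spectra lie in opposite open half-planes. Your version costs a little more work but exposes the mechanism (and the identity $A_-^\top\Delta+\Delta A_- = \Delta M\Delta$ you derive is exactly the one underlying Lemma~\ref{lem:secondRiccatiSol}); the paper's version is shorter but leans on the strictness of the inequality $P \prec P_+$ produced in 2). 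Both are valid; no gaps.
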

\begin{proof}
    According to \cite{scherer1991solution}, $R \prec 0$ implies that $P\in \bbS^n$ with $F(P,\lambda) \prec 0$ exists if and only if $P_+$ and $P_-$ exist.
    
    1)~ Due to $R \prec 0$, this fact can be found in \cite{scherer1991solution}.
    
    2)~ Since $P_+$ exists, $(A-B K_+)$ is anti-stable, where $K_+ = R^{-1}(P_+B + S)^\top$. Therefore, the Lyapunov equation
    \begin{align*}
        (A-BK_+)^\top H + H (A-BK_+) = I
    \end{align*}
    has a solution $H \succ 0$. Now, for $\varepsilon > 0$, consider
    \begin{align*}
        F(P_+-\varepsilon H, \lambda )
        &=
        A^\top (P_+- \varepsilon H) + (P_+-\varepsilon H)A + Q\\
        &- (S + (P_+ - \varepsilon H) B) R^{-1} (S + (P_+ - \varepsilon H)B)^\top\\
        &= -\varepsilon (A-BK_+)^\top H - \varepsilon H (A-BK_+)\\
        &- \varepsilon^2 H B R^{-1} B^\top H\\
        &= -\varepsilon I - \varepsilon^2 H B R^{-1} B^\top H.
    \end{align*}
    This expansion proves that there exists an $\varepsilon_0 > 0$, such that $P := P_+ - \varepsilon H$ is feasible for \eqref{eq:LMI} for all $\varepsilon \in ]0,\varepsilon_0[$.
    
    3)~ Due to 2), there exists $P\in \bbS^n$ with $P\prec P_+$. Hence, by 1), we have
    $P_- \preceq P \prec P_+$.
\end{proof}

Next, we can show the equivalence of \eqref{eq:coreProblem} and \eqref{eq:alternativeProblem}.

\emph{Proof of Theorem \ref{thm:equivalence}:} 
Let $(\lambda,P)$ be any feasible point of \eqref{eq:coreProblem}. Due to Lemma \ref{lem:KYP} this implies $R \prec 0$ and the existence of $P_+$, i.e., $\lambda \in \calD$. Furthermore, $P \preceq P_+$ holds (Lemma \ref{lem:RiccatiFeasibility}) implying $P_+ \succ 0$ and $c^\top \lambda - \trace \Sigma P \leq c^\top \lambda - \trace \Sigma P_+$. Hence, $\lambda$ is feasible for \eqref{eq:alternativeProblem} and the optimal value of \eqref{eq:alternativeProblem} is smaller than or equal to the value of \eqref{eq:coreProblem}.

Now let $\lambda \in \bbR^p$ be any feasible point of \eqref{eq:alternativeProblem}. Then $R \prec 0$ holds true and the anti-stabilizing solution $P_+$ exists implying (Lemma \ref{lem:RiccatiFeasibility}) the existence of $P \in \bbS^n$ with $P_+ - \varepsilon I \preceq P \prec P_+$ and $F(P,\lambda) \prec 0$ for any $\varepsilon > 0$. Hence, we can choose $\varepsilon$ so small that $P \succ 0$ is guaranteed and we can perform a Schur complement showing that $(P,\lambda)$ also satisfies \eqref{eq:LMI}. Furthermore, $P$ can be moved arbitrarily close to $P_+$ implying that the optimal value of \eqref{eq:coreProblem} is smaller than or equal to the value of \eqref{eq:alternativeProblem}. {\hfill $\blacksquare$}

Theorem \ref{thm:equivalence} already implies that the feasible set of \eqref{eq:alternativeProblem} is convex since it is the projection of the convex feasible set of \eqref{eq:coreProblem} onto the $\lambda$ variable. However, we are also able to show that all the constraint functions and the objective function of \eqref{eq:alternativeProblem} are convex. To this end, we show the convexity (concavity) of the Hermitian valued functions $\lambda \mapsto P_-(\lambda)$ and $\lambda \mapsto P_+(\lambda)$. Such functions are called convex with respect to the cone of positive semi-definite matrices, if $P_-(\alpha \lambda_1 + (1-\alpha) \lambda_2) \preceq \alpha P_-(\lambda_1) + (1-\alpha) P_-(\lambda_2)$ holds for all $\alpha \in [0,1]$ or concave, if $P_+(\alpha \lambda_1 + (1-\alpha) \lambda_2) \succeq \alpha P_+(\lambda_1) + (1-\alpha) P_+(\lambda_2)$ holds for all $\alpha \in [0,1]$ (\cite{boyd2004convex} p. 109). 

\begin{lemma}
   The mapping $\calD \to \bbS^n, \lambda \mapsto P_-(\lambda)$ is convex and the mapping $\calD \to \bbS^n,\lambda \mapsto P_+(\lambda)$ is concave. Furthermore, the mappings $\lambda \mapsto -\log\det P_+ (\lambda)$, $\lambda \mapsto -\log\det \Delta(\lambda)$, and $\lambda \mapsto - \trace \Sigma P_+(\lambda)$ are convex.
\end{lemma}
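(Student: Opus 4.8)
The plan is to establish the convexity/concavity of $P_-$ and $P_+$ first, since all the remaining claims follow from this by standard composition rules. The cleanest route to concavity of $P_+$ is to use characterization 3) in Lemma~\ref{lem:KYP} together with the extremal property from Lemma~\ref{lem:RiccatiFeasibility}, part~1). Specifically, I would use the representation of $P_+(\lambda)$ as a pointwise maximum (with respect to the positive semi-definite ordering) over the feasible set: by Lemma~\ref{lem:RiccatiFeasibility}, $P_+(\lambda)$ is the largest $P$ satisfying $F(P,\lambda) \preceq 0$. The key observation is that for fixed direction $x \in \bbR^n$, the scalar quantity $x^\top P_+(\lambda) x = \max\{ x^\top P x : F(P,\lambda) \preceq 0\}$, so it suffices to show that this constrained maximum is concave in $\lambda$ for each fixed $x$, which then yields $P_+(\alpha\lambda_1 + (1-\alpha)\lambda_2) \succeq \alpha P_+(\lambda_1) + (1-\alpha)P_+(\lambda_2)$.

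The main step is therefore to show that the constraint set $\{(P,\lambda) : F(P,\lambda) \preceq 0,\ R(\lambda) \prec 0\}$ is jointly convex in $(P,\lambda)$. This is where I would invoke the Schur complement and the equivalence between 2) and 3) in Lemma~\ref{lem:KYP}: the Riccati inequality $F(P,\lambda) \preceq 0$ with $R(\lambda) \prec 0$ is equivalent, via the Schur complement, to the linear matrix inequality \eqref{eq:LMI}, which is \emph{jointly affine} in $(P,\lambda)$. Hence the feasible set in $(P,\lambda)$-space is convex (it is a spectrahedron defined by \eqref{eq:LMI} together with $R(\lambda)\prec 0$). Since $x^\top P x$ is linear in $P$, maximizing a linear functional over a convex set that depends affinely on the parameter $\lambda$ yields a concave function of $\lambda$; this is the standard partial-maximization argument. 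The analogous statement with minimum in place of maximum, using that $P_-(\lambda)$ is the smallest feasible $P$, gives convexity of $P_-$.

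From concavity of $P_+$ the remaining three claims are routine consequences. For $\lambda \mapsto -\trace \Sigma P_+(\lambda)$: since $\Sigma \succeq 0$, the map $P \mapsto \trace \Sigma P$ is a nonnegative-weighted sum of diagonal-type entries, hence monotone and linear, so $\trace \Sigma P_+(\lambda)$ is concave and its negative is convex. For $\lambda \mapsto -\log\det P_+(\lambda)$: the function $P \mapsto -\log\det P$ is convex and monotone nonincreasing on the positive definite cone, and composing a convex nonincreasing scalar-matrix function with the concave map $P_+$ preserves convexity (\cite{boyd2004convex}, composition rules for matrix-convex functions). For $\lambda \mapsto -\log\det\Delta(\lambda)$ with $\Delta = P_+ - P_-$: since $P_+$ is concave and $P_-$ is convex, their difference $\Delta$ is concave, and the same monotone-composition argument applied to $-\log\det$ gives convexity.

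The hard part will be making the partial-maximization argument fully rigorous as a statement about matrix-valued (rather than scalar) concavity, and in particular justifying that the pointwise-in-$x$ concavity of $x^\top P_+(\lambda)x$ assembles into the matrix inequality $P_+(\alpha\lambda_1+(1-\alpha)\lambda_2)\succeq \alpha P_+(\lambda_1)+(1-\alpha)P_+(\lambda_2)$. This assembly is immediate once one fixes an arbitrary $x$ and notes that $\alpha P_+(\lambda_1)+(1-\alpha)P_+(\lambda_2)$ is feasible for the convex constraint at $\alpha\lambda_1+(1-\alpha)\lambda_2$ (by joint convexity of the feasible set), so the maximal element $P_+$ at the convex combination dominates it. I would present the joint convexity of the LMI-defined set as the central lemma and then derive the ordering inequality directly, avoiding the pointwise detour altogether, which is cleaner.
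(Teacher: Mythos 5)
Your proposal is correct and, once you discard the pointwise $x^\top P x$ detour in favor of the direct matrix-ordering argument (as you do in your last paragraph), it is essentially the paper's own proof: joint convexity of the Schur-complemented constraint \eqref{eq:LMI} in $(P,\lambda)$ shows $\alpha P_+(\lambda_1)+(1-\alpha)P_+(\lambda_2)$ satisfies $F(\cdot,\lambda_\alpha)\preceq 0$, and the extremality in Lemma~\ref{lem:RiccatiFeasibility}~1) then gives the concavity inequality, with the remaining claims following from the standard composition rules you cite.
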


\begin{proof}
    W.l.o.g. consider $\lambda \mapsto P_+ (\lambda)$ and two arbitrary $\lambda_1,\lambda_2 \in \calD$. Then $P_1 = P_+(\lambda_1)$ and $P_2 = P_+(\lambda_2)$ are solutions of the Riccati equation and thus both satisfy the constraint \eqref{eq:LMI}. Since \eqref{eq:LMI} is a convex constraint in both $\lambda$ and $P$, also $\lambda_\alpha = \alpha \lambda_1 + (1-\alpha) \lambda_2$ and $P_\alpha = \alpha P_1 + (1-\alpha) P_2)$ satisfy \eqref{eq:LMI} for any $\alpha \in [0,1]$. Consequently, $P_\alpha$ satisfies the Riccati inequality $F(P_\alpha,\lambda) \preceq 0$ for $\lambda = \lambda_\alpha$ implying by 1) of Lemma \ref{lem:RiccatiFeasibility} that
    \begin{align*}
        \alpha P_+(\lambda_1) + (1-\alpha) P_+(\lambda_2) = P_\alpha
        \preceq P_+(\alpha \lambda_1 + (1-\alpha) \lambda_2)
    \end{align*}
    holds. This shows the concavity of $P_+(\cdot)$.

    The convexity of the $\log \det$ functions and the cost function of \eqref{eq:alternativeProblem} follows from the composition theorem (\cite{boyd2004convex} page 110) for convex functions.
\end{proof}

A key role in our barrier function \eqref{eq:v_t} is played by the difference $\Delta$ between the stabilizing and anti-stabilizing solution of the Riccati equation. This difference can be obtained by solving the Riccati equation twice or, more efficiently, it can be obtained from only one solution $P_+$ of the Riccati equation and then solving a Lyapunov equation, according to the following lemma.

\begin{lemma}
    \label{lem:secondRiccatiSol}
    Let $P_1, P_2$ denote two solutions of the Riccati equation $F(P,\lambda) = 0$ and $K_{1} = R^{-1}(S + P_{1}B)^\top$ the controller gain of $P_1$. If the difference $Y = P_2 - P_1$ is invertible, then it satisfies the Lyapunov equation
    \begin{align}
        Y^{-1}(A-BK_1)^\top + (A-BK_1) Y^{-1} &= BR^{-1}B^\top \label{eq:lyapDeltaP1}
    \end{align}
    and $(A-BK_1)$ has no eigenvalues on the imaginary axis.
\end{lemma}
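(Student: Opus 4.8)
The plan is to obtain the Lyapunov equation by subtracting the two Riccati identities and then to settle the spectral claim by a controllability argument applied to that equation. First I would write $F(P_1,\lambda)=0$ and $F(P_2,\lambda)=0$ from \eqref{eq:Riccati} and subtract them. Abbreviating $M_i=P_iB+S$, so that $K_i=R^{-1}M_i^\top$ and $M_2-M_1=YB$ with $Y=P_2-P_1$, the quadratic terms telescope as
\[
M_2R^{-1}M_2^\top - M_1R^{-1}M_1^\top = YBK_2 + K_1^\top B^\top Y .
\]
Combining this with the linear part $A^\top Y + YA$ and regrouping yields $(A-BK_1)^\top Y + Y(A-BK_2)=0$. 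The step that makes the cross terms collapse into a single feedback is the identity $K_2-K_1=R^{-1}B^\top Y$, immediate from the definitions of $K_1,K_2$; substituting $A-BK_2=(A-BK_1)-BR^{-1}B^\top Y$ gives the symmetric relation
\[
(A-BK_1)^\top Y + Y(A-BK_1) = YBR^{-1}B^\top Y .
\]
Multiplying on both sides by $Y^{-1}$ — this is where the invertibility hypothesis on $Y$ enters — produces exactly \eqref{eq:lyapDeltaP1}.

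For the spectral claim I would argue by contradiction. Suppose $A-BK_1$ has an eigenvalue $i\omega$ on the imaginary axis, and let $u\neq 0$ satisfy $(A-BK_1)^\top u = i\omega u$. Writing $W=Y^{-1}$ and pre- and post-multiplying \eqref{eq:lyapDeltaP1} by $u^*$ and $u$, the two terms on the left cancel: one contributes $i\omega\,u^*Wu$ and the other $-i\omega\,u^*Wu$, using that $A-BK_1$ is real so that $(A-BK_1)^\top u=i\omega u$ implies $u^*(A-BK_1)=-i\omega u^*$. This leaves $u^*BR^{-1}B^\top u=0$. Since the relevant regime has $R\prec 0$ (which holds throughout $\calD$), the form $-R^{-1}$ is positive definite, forcing $B^\top u=0$. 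But then $(A-BK_1)^\top u = A^\top u$, so $A^\top u=i\omega u$ with $B^\top u=0$; equivalently $u^*$ is a left eigenvector of $A$ that annihilates $B$. By the Popov--Belevitch--Hautus test this contradicts the standing controllability assumption on $(A,B)$, which completes the argument.

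The telescoping in the first step is routine bookkeeping, and the one place to stay attentive there is the cancellation $K_2-K_1=R^{-1}B^\top Y$, since it is precisely what reduces both cross terms to $A-BK_1$. The genuinely load-bearing ingredients are the eigenvector cancellation together with the definiteness of $R$ and controllability of $(A,B)$: these are what turn ``$i\omega$ is an eigenvalue of $A-BK_1$'' into an uncontrollable imaginary-axis mode of $(A,B)$, a contradiction. I do not anticipate a serious obstacle beyond this, provided $R\prec 0$ is in force as assumed on $\calD$.
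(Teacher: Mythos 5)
Your proof is correct and follows essentially the same route as the paper: subtract the two Riccati identities to get $(A-BK_1)^\top Y + Y(A-BK_1) = YBR^{-1}B^\top Y$, multiply by $Y^{-1}$, and then rule out imaginary eigenvalues by sandwiching \eqref{eq:lyapDeltaP1} with an eigenvector and invoking $R\prec 0$ together with controllability of $(A,B)$ via the Hautus test. The only cosmetic difference is that the paper phrases the last step as observability of $\bigl((A-BK_1)^\top, BR^{-1}B^\top\bigr)$ inherited from controllability of $(A,B)$, whereas you reduce directly to the PBH condition for $(A,B)$ via $B^\top u=0$; both are equivalent.
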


\begin{proof}
    Since $P_2$ is a solution of the Riccati equation, $F(P_2,\lambda) = 0$ holds true. Substituting $P_1 + Y$ for $P_2$ yields
    \begin{align*}
        0 &=  A^\top (P_1 + Y) + (P_1 + Y) A + Q\\
        &- (S + (P_1 + Y)B) R^{-1} (S + (P_1 + Y) B)^\top .
    \end{align*}
    By rearranging terms and using $F(P_1,Y) = 0$ we obtain
    \begin{align*}
        0 &= A^\top Y + Y A - Y B K_1 - K_1^\top B^\top Y - Y B R^{-1} B^\top Y.
    \end{align*}
    Multiplying this equation from both sides by $Y$ yields the claimed Lyapunov equation.
    Next, we show that $(A-BK_1)$ has no imaginary eigenvalues. To this end, assume that $w$ is an eigenvector of $(A-BK_1)^\top$ with imaginary eigenvalue $\mu$. Multiplying \eqref{eq:lyapDeltaP1} from both sides by $w$ yields
    \begin{align*}
        w^* BR^{-1} B^\top w = w^* Y^{-1} (\mu w) + (\mu w)^* Y^{-1} w = 0.
    \end{align*}
    Since $BR^{-1} B^\top$ is negative semi-definite, this implies $BR^{-1} B^\top w = 0$. The latter cannot be true, since $(A,B)$ is controllable implying that $((A-BK_1)^\top,BR^{-1} B^\top)$ is observable. This contradicts the existence of an eigenvector of $(A-BK_1)^\top$ with $BR^{-1} B^\top w = 0$ by the Hautus Lemma.
\end{proof}

Both the Riccati equation \eqref{eq:Riccati} and the Lyapunov equation \eqref{eq:lyapDeltaP1} also appear in \cite{KaoMeg07}. There, these equations are solved to obtain the factorization of a transfer matrix involved in their barrier function. 
Our arguments show that solving this Riccati and Lyapunov equation corresponds to computing both solutions of the Riccati equation \eqref{eq:Riccati}.

Finally, we conclude in the following lemma that \eqref{eq:v_t} is indeed a suitable barrier function for the problem \eqref{eq:alternativeProblem}.

\begin{theorem}
    Let $t > 0$ be fixed and let $(\lambda_k)$ be a convergent sequence of feasible values for \eqref{eq:alternativeProblem}. If the limit $\bar{\lambda}$ of $(\lambda_k)$ lies at the boundary of the feasible set of \eqref{eq:alternativeProblem}, then $v_t(\lambda_k)$ converges to infinity.
\end{theorem}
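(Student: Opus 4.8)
The plan is to argue by contradiction: suppose $v_t(\lambda_k)$ does \emph{not} tend to $+\infty$, so that along a subsequence $v_t(\lambda_{k_j}) \le M < \infty$. I would decompose $v_t$ as in \eqref{eq:v_t} into the affine part $t c^\top \lambda_k$, the linear part $-t\,\trace \Sigma P_+(\lambda_k)$, and the four barriers $-\log\det N$, $-\log\det P_+$, $-\log\det(-R)$, $-\log\det \Delta$. Since $N$ and $R$ are affine and $\lambda_k \to \bar\lambda$ is finite, $N(\lambda_k)\to N(\bar\lambda)$ and $R(\lambda_k)\to R(\bar\lambda)$ are automatically bounded. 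The first genuine obstacle is that $P_+$ and $\Delta$ are nonlinear in $\lambda$ and could in principle blow up; if $\|P_+(\lambda_k)\|\to\infty$, then $-t\,\trace\Sigma P_+$ would diverge to $-\infty$ \emph{linearly} and overwhelm the logarithmic barriers, which would break the claim. So boundedness of $P_+$, $P_-$ and $\Delta$ along the sequence is the crux.

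To resolve this I would invoke the concavity of $P_+$ and convexity of $P_-$ already established above. For any $C \succeq 0$, the scalar map $\lambda \mapsto \trace(C P_+(\lambda))$ is concave on the open set $\calD$, hence admits a supergradient at the interior point $\lambda_1$; evaluating the supporting inequality at $\lambda_k$ and using that $\{\lambda_k\}$ is bounded shows $\trace(C P_+(\lambda_k))$ is bounded above. Taking $C = e_i e_i^\top$ and using $P_+ \succ 0$ together with $|(P_+)_{ij}| \le \sqrt{(P_+)_{ii}(P_+)_{jj}}$ bounds $P_+(\lambda_k)$. Convexity of $P_-$ furnishes a matching lower bound, and $P_- \preceq P_+$ an upper bound, so $P_-$ and hence $\Delta = P_+ - P_-$ are bounded as well. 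With all four barrier matrices bounded and (semi)definite, each $-\log\det$ term is bounded below, while $-t\,\trace\Sigma P_+$ and $tc^\top\lambda_k$ are bounded; thus $v_t$ is bounded below along the sequence, and the only way it can fail to diverge is if no barrier matrix approaches singularity.

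Along the bounded subsequence I would then extract a further subsequence on which $P_+(\lambda_{k_j}) \to P_+^\ast$ and $\Delta(\lambda_{k_j}) \to \Delta^\ast$. Boundedness of $v_t$ forces $N(\bar\lambda) \succ 0$, $R(\bar\lambda)\prec 0$, $P_+^\ast \succ 0$ and $\Delta^\ast \succ 0$, since otherwise some $-\log\det$ term diverges while all others stay bounded below. Passing to the limit in $F(P_+(\lambda_{k_j}),\lambda_{k_j})=0$ and in $F(P_-(\lambda_{k_j}),\lambda_{k_j})=0$ shows that $P_+^\ast$ and $P_-^\ast = P_+^\ast - \Delta^\ast$ solve the Riccati equation at $\bar\lambda$, with $\Delta^\ast = P_+^\ast - P_-^\ast \succ 0$ invertible. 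Applying Lemma \ref{lem:secondRiccatiSol} (with $P_1 = P_+^\ast$, $P_2 = P_-^\ast$) then certifies that $A - BK_+^\ast$ has no eigenvalues on the imaginary axis; being also the limit of the anti-stable matrices $A - BK_+(\lambda_{k_j})$, its spectrum lies in the closed right half-plane, hence in fact in the open right half-plane, so $P_+^\ast$ is genuinely the anti-stabilizing solution and $\bar\lambda \in \calD$ with $P_+(\bar\lambda)=P_+^\ast$.

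Consequently $\bar\lambda$ satisfies $N(\bar\lambda)\succ 0$, $P_+(\bar\lambda)\succ 0$ and $\bar\lambda\in\calD$ strictly, i.e. $\bar\lambda$ is an interior point of the feasible set of \eqref{eq:alternativeProblem}, contradicting the hypothesis that it lies on the boundary. This contradiction yields $v_t(\lambda_k)\to\infty$. I expect the main obstacle to be the boundedness step of the second paragraph, since without it the linear cost term can cancel the barrier growth; the secondary technical point is the use of Lemma \ref{lem:secondRiccatiSol} to upgrade ``closed right half-plane'' to ``anti-stable'', i.e.\ to rule out that the limiting solution is a merged/defective solution rather than the true anti-stabilizing one.
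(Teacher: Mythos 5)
Your proof is correct, and its overall skeleton (establish boundedness of $P_\pm(\lambda_k)$, then argue that some barrier matrix must become singular unless the limits solve the Riccati equation and Lemma~\ref{lem:secondRiccatiSol} forces $\bar\lambda\in\calD$) matches the paper's. The genuine difference lies in how the crucial boundedness step is obtained. The paper perturbs the data as in Remark~\ref{rem:initalFeasible} to $\widetilde Q = Q-\lambda_0 I$, $\widetilde R = R-\lambda_0 I$, so that $P_\pm(\lambda_k)$ become strict sub-solutions of the perturbed Riccati equation, and then sandwiches $\widetilde P_-(\lambda_k)\preceq P_-(\lambda_k)\preceq P_+(\lambda_k)\preceq \widetilde P_+(\lambda_k)$ via Lemma~\ref{lem:RiccatiFeasibility}.1, with the outer sequences convergent because $\bar\lambda$ is (asserted to be) strictly feasible for the perturbed problem. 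You instead exploit the matrix concavity of $P_+$ and convexity of $P_-$ through a supporting-hyperplane (supergradient) argument applied to $\lambda\mapsto\trace(CP_+(\lambda))$ for $C\succeq 0$, recovering entrywise bounds from the diagonal via $|(P_+)_{ij}|\le\sqrt{(P_+)_{ii}(P_+)_{jj}}$. Your route has the advantage of not requiring that $\bar\lambda$ be feasible for the perturbed problem --- a point the paper asserts without detailed justification --- at the price of needing the concavity lemma and a small extra step for the off-diagonal entries of $P_-$ (which need not be positive semi-definite; one should bound $\Delta\succ 0$ entrywise first and then write $P_-=P_+-\Delta$, as the geometric-mean bound does not apply to $P_-$ directly). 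You are also more explicit than the paper on a point it leaves terse: upgrading ``no imaginary-axis eigenvalues of $A-BK_+^\ast$'' to ``anti-stable'' by combining Lemma~\ref{lem:secondRiccatiSol} with the fact that the spectrum of a limit of anti-stable matrices lies in the closed right half-plane. Both arguments are valid; yours is a legitimate alternative rather than a reproduction.
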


\begin{proof}
    Since $\bar{\lambda}$ is on the boundary of the feasible set of \eqref{eq:alternativeProblem}, we can perturb the problem data $Q,R,N$ as in Remark \ref{rem:initalFeasible} to $\widetilde{Q}(\lambda) = Q(\lambda) - \lambda_0 I$, $\widetilde{R}(\lambda) = R(\lambda) - \lambda_0 I$ and $\widetilde{N}(\lambda) = N(\lambda) + \lambda_0 I$ with $\lambda_0 > 0$. For the perturbed problem, all $\lambda_k$ and $\bar{\lambda}$ are feasible and $P_+$ and $P_-$ satisfy the strict Riccati inequality. Consequently, $P_+$ and $P_-$ satisfy by Lemma \ref{lem:RiccatiFeasibility} the inequality
    \begin{align*}
        \widetilde{P}_-(\lambda_k) \preceq P_-(\lambda_k) \preceq P_+(\lambda_k) \preceq \widetilde{P}_+(\lambda_k)
    \end{align*}
    for all $k \in \bbN$, where $\widetilde{P}_-(\lambda_k)$ and $\widetilde{P}_+(\lambda_k)$ are the solutions of the perturbed Riccati equations. Since $\widetilde{P}_-(\lambda_k)$ and $\widetilde{P}_+(\lambda_k)$ converge to $\widetilde{P}_-(\bar{\lambda})$ and $\widetilde{P}_+(\bar{\lambda})$, the sequences $P_+(\lambda_k)$ and $P_-(\lambda_k)$ are bounded and, consequently, all log-determinants in \eqref{eq:v_t} are bounded from below.

    If $R(\bar{\lambda}) \not \prec 0$, then, by continuity, we infer $R(\bar{\lambda}) \preceq 0$ and $\det R(\bar{\lambda}) = 0$ which implies that one of the terms in \eqref{eq:v_t} goes to infinity while the others are bounded from below.

    Hence, suppose $R(\bar{\lambda}) \prec 0$. If $\det \Delta (\lambda_k)\to 0$, then $v_t(\lambda_k)$ also goes to infinity. If $\det \Delta (\lambda_k)$ does not converge to zero, then there exist accumulation points $\ovl{P}_-$ and $\ovl{P}_+$ of $P_-(\lambda_k)$ and $P_+(\lambda_k)$ with $\det (\ovl{P}_+ - \ovl{P}_-) \neq 0$, since $P_-(\lambda_k)$ and $P_+(\lambda_k)$ are bounded sequences. By continuity we infer $\ovl{P}_+ - \ovl{P}_- \succ 0$ and that $\ovl{P}_+$ and $\ovl{P}_-$ solve the Riccati equation. Hence, by Lemma \ref{lem:secondRiccatiSol}, the eigenvalues of $(A-B\ovl{K}_+)$ and $(A-B\ovl{K}_-)$ cannot lie on the imaginary axis implying that $\ovl{P}_-$ and $\ovl{P}_+$ are (anti-) stabilizing solutions of $F(P,\bar{\lambda}) = 0$. In this case, we infer $\bar{\lambda} \in \calD$ implying that $\bar{\lambda}$ can only be infeasible if $P_+(\bar{\lambda}) \not \succ 0$. Then, however, we also obtain $v_t(\lambda_k) \to \infty$.
\end{proof}

\section{On the complexity of Algorithm \ref{alg:1}}
\label{sec:5}

The complexity of one (Newton) iteration of Algorithm~\ref{alg:1} is dominated by evaluating $P_+$ and $P_-$ and by computing the derivatives $\partial_{\lambda_i} P_+$, $\partial_{\lambda_i} P_-$ and $\partial_{\lambda_i} \partial_{\lambda_j} P_+, \partial_{\lambda_i} \partial_{\lambda_j} P_-$ of these matrices for $i , j = 1,\ldots, p$. To this end, $q_1$ Riccati equations need to be solved, where $q_1$ is the number of line search iterations, and $q_1 + p(p+3)$ Lyapunov equations need to be solved. Here, two times $p$ Lyapunov equations are required for the first order derivatives and two times $p(p+1)/2$ Lyapunov equations are required for the second order derivatives of $P_+$ and $P_-$. Since the matrix variables involved in these Riccati and Lyapunov equations are of the size $n\times n$, we can refer to \cite{ramesh1989computational} for complexity results. Using e.g. the Schur method, the leading term of the multiplication/division operations required for solving the Riccati equation is $45q_2 n^3$, where $q_2$ is the \emph{average number of double QR-iterations required to make a sub diagonal element equal to zero} \cite{ramesh1989computational}. For Lyapunov equations, the leading term of the complexity bound can be reduced to $5q_2 n^3$. 

Summing this up leads to an asymptotic complexity of $5n^3q_2 (10 q_1 + p(p+3))$ for solving Riccait and Lyapunov equations. In addition, there is a computational effort of $O(q_1p(n^2+m^2+r^2)$ for evaluating $Q(\lambda), S(\lambda), R(\lambda)$ and $N(\lambda)$, of $O(p^2(n + m + r)^2 + p(n + m + r)^3)$ for evaluating the log-determinant and the formulas for its derivatives, of $O(p^2(nm^2 + n^2))$ for setting up the Riccati and Lyapunov equations, and of $O(p^3)$ for solving the Newton system. However, these should all be dominated by the complexity of solving Riccati and Lyapunov equations.

\section{Application example: Robust state feedback design}
\label{sec:6}

Unlike the prior works we cited in the introduction, Algorithm \ref{alg:1} enables the solution of KYP-LMIs for state-feedback synthesis. Thus, we consider as a benchmark a robust LQR synthesis task for dynamical systems
\begin{align}
	\begin{pmatrix}
		\dot{x}(t)\\
		z(t)
	\end{pmatrix}
	=
	\begin{pmatrix}
		\calA & \calB_1 & \calB_2\\
		\calC & \calD_1 & \calD_2
	\end{pmatrix}
	\begin{pmatrix}
		x(t)\\
		u(t)\\
		w(t)
	\end{pmatrix}. \label{eq:cont_sys}
\end{align}
In this state space description, $x(t) \in \bbR^n$ is the state, $u(t) \in \bbR^m$ is the control input, and $w(t) \in \bbR^{d}$ and $z(t) \in \bbR^{l}$ are the input and output of an uncertain system component. We assume that this uncertain component satisfies for all times the family of quadratic constraints
\begin{align}
	\begin{pmatrix}
		\calC x + \calD_1 u + \calD_2 w\\
		w
	\end{pmatrix}^\top M(\lambda)^{-1}
	\begin{pmatrix}
		\calC x + \calD_1 u + \calD_2 w\\
		w
	\end{pmatrix} &\geq 0 \label{eq:multiplier}
\end{align}
for all $\lambda \in \bbR^p$ with $N(\lambda) \succ 0$. Our goal is finding a robust performance control Lyapunov function $V: \bbR^n \to \bbR_{\geq 0}, x \mapsto x^\top P^{-1} x$ with positive definite $P = P^\top$ such that
\begin{align}
	\min_{u\in \bbR^m}\nabla V(x)^\top (\calA x + \calB_1 u + \calB_2 w) + x^\top \calQ x + u^\top \calR u \leq 0 \label{eq:robust_LQR}
\end{align}
holds true for all $x\in \bbR^n$ and all $w \in \bbR^d$ satisfying \eqref{eq:multiplier}. In \eqref{eq:robust_LQR}, $\calQ$ and $\calR$ are positive definite matrices and $x^\top \calQ x + u^\top \calR u$ is a stage cost function. As we show in Appendix \ref{app:1}, using standard  techniques from robust control, such a Lyapunov function can be found by solving the SDP
\begin{align}
	\minimize_{P \in \bbS^n,\lambda \in \bbR^p} ~&~ -\trace P \label{eq:dual_opt}
\end{align}
subject to $P \succ 0$, $N(\lambda) \succ 0$ and
\begin{align}
	&\resizebox{0.89\linewidth}{!}{$0 \succ \left(
	\begin{array}{c|cc}
		\calA^\top & I & \calC^\top\\ \hline
		I & 0 & 0
	\end{array}
	\right)^\top
	\left(
	\begin{array}{c|c}
		0 & P\\\hline
		P & 0
	\end{array}
	\right)
	\left(
	\begin{array}{c|cc}
		\calA^\top & I & \calC^\top\\ \hline
		I & 0 & 0
	\end{array}
	\right)-$} \label{eq:robustSynthesisKYP}\\
	&(\star)^\top \begin{pmatrix}
		\calQ^{-1} & 0 & &\\
		0 & \calR^{-1} & &\\
		& & M_{11}(\lambda) & M_{12}(\lambda)\\
		& & M_{21}(\lambda) & M_{22}(\lambda)
	\end{pmatrix}
	\left(
	\begin{array}{c|cc}
		0 & -I & 0\\
		\calB_1^\top & 0 & \calD_1^\top\\
		0 & 0 & -I\\
		\calB_2^\top & 0 & \calD_2^\top
	\end{array}\right) \nonumber
\end{align}
if the family of multipliers satisfies the conditions
\begin{align}
	\begin{pmatrix}
		I\\
		\calD_2^\top
	\end{pmatrix}^\top
	\begin{pmatrix}
		M_{11}(\lambda) & M_{12}(\lambda)\\
		M_{21}(\lambda) & M_{22}(\lambda)
	\end{pmatrix}
	\begin{pmatrix}
		I\\
		\calD_2^\top
	\end{pmatrix} &\succ 0,~ M_{22}(\lambda) \prec 0 \label{eq:MultConstraint}
\end{align}
for all $\lambda \in \bbR^p$ with $N(\lambda) \succ 0$.

In order to consider realistic control systems, we use the database \cite{leibfritz2003description} to select the system matrices $\calA$ and $\calB_1$. To model uncertainty (which is not available in \cite{leibfritz2003description}), we assume that the actuators of our controller are subject to a parametric multiplicative uncertainty of $25\%$. This model assumption can be implemented by choosing the matrices $\calC = 0$, $\calD_1 = I$, $\calD_2 = 0$ and $\calB_2 = \calB_1$. Furthermore, the multiplier matrix can be chosen as
\begin{align*}
	M(\lambda) = \diag (\gamma^2\lambda_1,\ldots,\gamma^2\lambda_p,-\lambda_1,\ldots,-\lambda_p),
\end{align*}
where $\gamma = 0.25$. For these system matrices and multiplier matrix, we solve the KYP-SDP \eqref{eq:dual_opt} using Algorithm \ref{alg:1} and the off-the-shelve SDP solvers LMILab, SeDuMi \cite{sturm1999using} and Mosek \cite{mosek}. Solution times for multiple discretizations of an Euler Bernoulli Beam (EB) system, a heat flow (HF) system, and a cable mass (CM) model can be found in Table \ref{tab:computationTimes}. Our implementation, as well as the statistics for all the other models featured in \cite{leibfritz2003description}, are provided on github (\url{https://github.com/SphinxDG/KYP-SDP}).

\begin{table}
	\caption{Computation times for the four solvers. The number of system states is $n$ the number of multipliers (control inputs) is $s = m$. ``-'' indicates that the solver was unable to solve the problem within $10^4s$.}
	\label{tab:computationTimes}
	\centering
	\begin{tabular}{@{}llllllll@{}}
		\toprule
		Problem & n & p & LMILab & SeDuMi & Mosek & Algo \ref{alg:1}
		\\\midrule
		EB1 & 10 & 1 & 0.918s & 0.181s & 0.175s & 0.0189s\\
		EB2 & 10 & 1 & 0.912s & 0.177s & 0.164s & 0.0169s\\
		EB3 & 10 & 1 & 0.969s & 8.11s & 0.177s & 0.0161s\\
		EB4 & 20 & 1 & 121s & 0.582s & 0.202s & 0.0568s\\
		EB5 & 40 & 1 & - & 5.55s & 1.35s & 0.450s\\
		EB6 & 160 & 1 & - & - & 610s & 8.13s\\
		HF2D3 & 4489 & 2 & - & - & - & 8579s\\
		HF2D4 & 2025 & 2 & - & - & - & 715s\\
		HF2D5 & 4489 & 2 & - & - & - & 8670s\\
		HF2D6 & 2025 & 2 & - & - & - & 690s\\
		CM1 & 20 & 1 & 140s & 0.604s & 0.314s & 0.130s\\
		CM2 & 60 & 1 & - & 41.3s & 10.5s & 0.90s\\
		CM3 & 120 & 1 & - & 2770s & 234s & 2.59s\\
		CM4 & 240 & 1 & - & - & 2095s & 19.9s\\
		CM5 & 480 & 1 & - & - & - & 92.0s\\
		CM6 & 960 & 1 & - & - & - & 404s
		\\\bottomrule
	\end{tabular}
\end{table}

\section{Conclusion}
\label{sec:7}

We present a new solver for KYP-SDPs. To exploit the structure of these LMI optimization problems, we formulate an equivalent problem, where the Lyapunov matrix of the KYP-LMI is eliminated by solving a Riccati equation instead. This step removes $O(n^2)$ variables from the SDP and preserves the convexity of the original problem. As we see in Table \ref{tab:computationTimes} the resulting algorithm achieves a significant speed-up compared to off-the-shelve solvers and is able to solve larger problems.

\bibliographystyle{abbrv}
\bibliography{sources.bib}

\appendix

\subsection{Elimination for robust LQR-synthesis (standard)}
\label{app:1}

Denote by $\widetilde{M}$ the matrix
\begin{align*}
	\begin{pmatrix}
		\widetilde{M}_{11} & \widetilde{M}_{12}\\
		\widetilde{M}_{21} & \widetilde{M}_{22}
	\end{pmatrix}
	:=
	\begin{pmatrix}
		M_{11}(\lambda) & M_{12}(\lambda)\\
		M_{21}(\lambda) & M_{22}(\lambda)
	\end{pmatrix}^{-1}.
\end{align*}
The first step to derive the KYP-LMI \eqref{eq:robustSynthesisKYP} is a multiplier relaxation of the constraint \eqref{eq:robust_LQR}. Namely, if there exists a $\lambda \in \bbR^p$ with $N(\lambda) \succ 0$ and a $\calK \in \bbR^{m \times n}$, such that
\begin{align}
	\nabla V(x)^\top ((\calA + \calB_1 \calK)x +& \calB_2 w) + \begin{pmatrix}
		z\\
		w
	\end{pmatrix}^\top \widetilde{M}(\lambda)
	\begin{pmatrix}
		z\\
		w
	\end{pmatrix} \nonumber\\
	&+
	x^\top (\calQ + \calK^\top \calR \calK) x \label{eq:continuous_time_constraint}
\end{align}
is non-positive for all $x\in \bbR^n \setminus \{0\}$ and $w \in \bbR^{d_1}$, then this implies \eqref{eq:robust_LQR}. The new constraint \eqref{eq:continuous_time_constraint} can be denoted as the semi-definite constraint that
\begin{align}
	&(\star)^\top \begin{pmatrix}
		0 & P^{-1} & & &\\
		P^{-1} & 0 & & &\\
		& & \calQ & 0 &\\
		& & 0 & \calR &\\
		&  & & & \widetilde{M}_{11} & \widetilde{M}_{12}\\
		& & & & \widetilde{M}_{21} & \widetilde{M}_{22}
	\end{pmatrix}
	\begin{pmatrix}
		I & 0\\
		\calA + \calB_1 \calK & \calB_2\\
		I & 0\\
		\calK & 0\\
		\calC + \calD_1 \calK & \calD_2\\
		0 & I
	\end{pmatrix}
	\label{eq:primalSynthesis}
\end{align}
must be negative definite. This constraint is non-convex due to $\calK$. Hence, we apply the following elimination lemma (\cite{HELMERSSON19993361}, Theorem 2) to eliminate the variable $\calK$.
\begin{lemma}
        \label{lem:elimination}
	Consider the matrix inequality
	\begin{align}
		\begin{pmatrix}
			I_k\\
			U^\top \calK V + W
		\end{pmatrix}^\top \calP \begin{pmatrix}
		I_k\\
		U^\top \calK V + W
	\end{pmatrix} \prec 0
	\label{eq:QI}
	\end{align}
	and assume that $\calP = \calP^\top$ is invertible with exactly $k$ negative eigenvalues. Let $U_{\perp}, V_{\perp}$ be basis matrices of $\ker (U), \ker (V)$. Then there exists a $\calK \in \bbR^{m \times n}$ such that \eqref{eq:QI} is satisfied if and only if
	\begin{align*}
		V_{\perp}^\top \begin{pmatrix}
			I\\ W
		\end{pmatrix}^\top \calP \begin{pmatrix}
		I\\ W
	\end{pmatrix} V_{\perp} \prec 0 ~\&~ 
	U_{\perp}^\top \begin{pmatrix}
		W^\top \\ -I
	\end{pmatrix}^\top \calP^{-1} \begin{pmatrix}
		W^\top\\ -I
	\end{pmatrix} U_{\perp} \succ 0.
	\end{align*}
\end{lemma}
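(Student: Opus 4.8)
The statement is an inertia-constrained version of the elimination (projection) lemma, so my plan is to combine a dualization step with a restriction argument for necessity and a completion argument for sufficiency. Write $F := U^\top \calK V + W$ and abbreviate the primal quadratic matrix inequality \eqref{eq:QI} as $\Pi(F) := \begin{pmatrix} I_k \\ F\end{pmatrix}^\top \calP \begin{pmatrix} I_k \\ F \end{pmatrix} \prec 0$. The first step is to record the dualization identity: because $\calP$ is nonsingular with exactly $k$ negative eigenvalues and $\mathrm{im}\begin{pmatrix} I_k \\ F\end{pmatrix}$ is a $k$-dimensional graph subspace, negative definiteness of $\calP$ on it means that this subspace is a \emph{maximal} $\calP$-negative subspace; by the standard dualization lemma this is equivalent to positive definiteness of $\calP^{-1}$ on the orthogonal complement $\mathrm{im}\begin{pmatrix} -F^\top \\ I\end{pmatrix}$. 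Hence, for every fixed $\calK$,
\[ \Pi(F) \prec 0 \iff \begin{pmatrix} F^\top \\ -I\end{pmatrix}^\top \calP^{-1} \begin{pmatrix} F^\top \\ -I \end{pmatrix} \succ 0. \]
I would either cite this dualization lemma as a standard inertia fact or prove it quickly from a congruence that block-diagonalizes $\calP$.

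Necessity of the two conditions is then immediate, and it is here that the kernels of $U$ and $V$ enter. Right-multiplying the graph generator by $V_\perp$ and using $V V_\perp = 0$ gives $(U^\top \calK V + W)V_\perp = W V_\perp$, so $\begin{pmatrix} I \\ F\end{pmatrix}V_\perp = \begin{pmatrix} I \\ W\end{pmatrix}V_\perp$ independently of $\calK$; restricting the primal inequality $\Pi(F)\prec 0$ to the range of $V_\perp$ therefore yields exactly the first right-hand condition. Symmetrically, $U U_\perp = 0$ gives $\begin{pmatrix} F^\top \\ -I\end{pmatrix}U_\perp = \begin{pmatrix} W^\top \\ -I\end{pmatrix}U_\perp$, and restricting the dual inequality above to the range of $U_\perp$ produces the second condition. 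Thus both stated conditions are necessary for any feasible $\calK$.

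The sufficiency direction is the real work, and where I expect the main obstacle. The difficulty is that $\Pi(F)$ depends \emph{quadratically} on $\calK$ (through the lower-right block of $\calP$), so the plain affine projection lemma does not apply directly. My plan is to reduce to a matrix-completion problem: after full-rank coordinate changes on the two factor spaces — which can be absorbed into $\calK$, $W$ and a congruence on $\calP$ — normalize $U$ and $V$ so that $U^\top \calK V$ becomes a single free block while the complementary coordinates are precisely those spanned by $U_\perp$ and $V_\perp$. In these coordinates the primal inequality becomes a symmetric block matrix in which the free block is the only degree of freedom and $W$ fixes the remaining entries. The two stated conditions then read as definiteness of the two Schur complements associated with the fixed corners, and the existence of a completing free block rendering the whole matrix negative definite is guaranteed exactly by these two definiteness conditions together with the inertia hypothesis (a Parrott/dilation-type completion argument). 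Undoing the normalization recovers the desired $\calK$.

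Finally, I would emphasize that the hypothesis that $\calP$ has exactly $k$ negative eigenvalues is used twice and is essential: it makes the $k$-dimensional graph subspace maximal, which is what legitimizes the dualization equivalence in the first step, and it is the inertia bookkeeping that certifies the completion in the sufficiency step. The main obstacle is therefore to organize the sufficiency completion so that the two Schur-complement conditions coincide verbatim with the conditions as stated; the remainder is routine linear algebra.
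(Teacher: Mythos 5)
The paper does not actually prove this lemma: it is quoted from the literature and justified solely by the citation (\cite{HELMERSSON19993361}, Theorem~2), so there is no in-paper argument to compare yours against line by line. Judged on its own, your attempt gets the easy half right and leaves the hard half unproved. The dualization step is correct: since $\calP$ has exactly $k$ negative eigenvalues, negative definiteness on the $k$-dimensional graph subspace makes that subspace maximal $\calP$-negative, and the equivalence with positive definiteness of $\calP^{-1}$ on the orthogonal complement $\mathrm{im}\begin{pmatrix} -F^\top \\ I\end{pmatrix}$ is the standard dualization lemma. Likewise, necessity follows exactly as you say from $V V_\perp = 0$ and $U U_\perp = 0$, which make the restricted primal and dual inequalities independent of $\calK$.

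The genuine gap is in sufficiency. The mechanism you propose --- normalize $U,V$ so that $U^\top\calK V$ is a single free block, view the primal inequality as ``a symmetric block matrix in which the free block is the only degree of freedom,'' and certify a completion by the definiteness of two Schur complements (Parrott/dilation) --- does not apply as stated, because the symmetric matrix in question is \emph{not an affine function} of the free block: writing $F = W + U^\top\calK V$, the form $\begin{pmatrix} I \\ F\end{pmatrix}^\top\calP\begin{pmatrix} I \\ F\end{pmatrix}$ contains the term $F^\top P_{22} F$ with $P_{22}$ the lower-right block of $\calP$, so the unknown enters quadratically. Parrott-type completion results characterize affine completions of partially specified symmetric matrices; they say nothing about this quadratic dependence. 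Nor can you linearize by a Schur complement, since $P_{22}$ need not be sign definite under the stated hypotheses (it is not even in the paper's own application, where the corresponding diagonal block contains a zero block). Closing this gap requires a different argument --- e.g., extending the $\calP$-negative subspace $\begin{pmatrix} I \\ W\end{pmatrix}\mathrm{im}(V_\perp)$, together with the dual information on $\mathrm{im}(U_\perp)$, to a maximal $\calP$-negative subspace and showing that this extension can be realized as a graph subspace of the admissible form $\mathrm{im}\begin{pmatrix} I \\ W + U^\top\calK V\end{pmatrix}$. That inertia/subspace-extension argument is the actual content of Helmersson's Theorem~2 and is the piece missing from your proposal.
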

Note that the assumption on the eigenvalues of the central matrix in \eqref{eq:primalSynthesis} is satisfied, since
\begin{align*}
	\begin{pmatrix}
		0 & P^{-1}\\
		P^{-1} & 0
	\end{pmatrix}
\end{align*}
has $n$ positive and $n$ negative eigenvalues, the matrix $\widetilde{M}$
has $d_1$ negative and $d_2$ positive eigenvalues due to \eqref{eq:MultConstraint} and $\calQ$ and $\calR$ have $n$ and $m$ positive eigenvalues. This makes a total number of $n + d_1$ negative eigenvalues.
Next, we reorder terms in \eqref{eq:primalSynthesis} to bring it to the form \eqref{eq:QI} and enable the application of Lemma \ref{lem:elimination}. This yields
\begin{align*}
	(\star)^\top \begin{pmatrix}
		0 & & P^{-1} & &\\
		& \widetilde{M}_{22} & & & & \widetilde{M}_{21} \\
		P^{-1} & & 0 & &\\
		& & & \calQ & 0 &\\
		& & & 0 & \calR &\\
		& \widetilde{M}_{12} & & & & \widetilde{M}_{11}\\
	\end{pmatrix}
	\begin{pmatrix}
		I_n & 0\\
		0 & I_{d_1}\\
		\calA + \calB_1 \calK & \calB_2\\
		I & 0\\
		\calK & 0\\
		\calC + \calD_1 \calK & \calD_2
	\end{pmatrix}.
\end{align*}
Here, we see that we can choose $\calP$ as the inner matrix of this product and
\begin{align*}
	U^\top \calK V + W = \begin{pmatrix}
		\calB_1\\
		0\\
		I\\
		\calD_1
	\end{pmatrix}\calK \begin{pmatrix}
		I & 0
	\end{pmatrix}
	+
	\begin{pmatrix}
		\calA & \calB_2\\
		I & 0\\
		0 & 0\\
		\calC & \calD_2
	\end{pmatrix}.
\end{align*}
The basis matrices of the kernels can be chosen as
\begin{align*}
	U_\perp &= \begin{pmatrix}
		I & 0 & 0\\
		0 & I & 0\\
		-\calB_1 & 0 & -\calD_1\\
		0 & 0 & I
	\end{pmatrix}, &
	V_{\perp} &= \begin{pmatrix}
		0\\
		I
	\end{pmatrix}.
\end{align*}
Next, by computing the products 
\begin{align*}
	\begin{pmatrix}
		W^\top\\ -I
	\end{pmatrix} U_{\perp}
	&=
	\begin{pmatrix}
		\calA^\top & I & \calC^\top\\
		\calB_2^\top & 0 & \calD_2^\top\\
		-I & 0 & 0\\
		0 & -I & 0\\
		\calB_1 & 0 & \calD_1\\
		0 & 0 & -I
	\end{pmatrix}, &
    \begin{pmatrix}
		I\\ W
	\end{pmatrix} V_{\perp}
    & =
    \begin{pmatrix}
		0\\
		I_{d_1}\\
		\calB_2\\
		0\\
		0\\
		\calD_2
    \end{pmatrix}
\end{align*}
and applying Lemma \ref{lem:elimination}, we can see that \eqref{eq:primalSynthesis} is negative definite if and only if
\begin{align}
	(\star)^\top
	\begin{pmatrix}
		0 & & P^{-1} & &\\
		& \widetilde{M}_{22} & & & & \widetilde{M}_{21} \\
		P^{-1} & & 0 & &\\
		& & & \calQ & 0 &\\
		& & & 0 & \calR &\\
		& \widetilde{M}_{12} & & & & \widetilde{M}_{11}\\
	\end{pmatrix}
	\begin{pmatrix}
		0\\
		I_{d_1}\\
		\calB_2\\
		0\\
		0\\
		\calD_2
	\end{pmatrix}
	\label{eq:largePrimal}
\end{align}
is negative definite and
\begin{align}
	(\star)^\top
	\begin{pmatrix}
		0 & & P & &\\
		& M_{22} & & & & M_{21} \\
		P & & 0 & &\\
		& & & \calQ^{-1} & 0 &\\
		& & & 0 & \calR^{-1} &\\
		& M_{12} & & & & M_{11}\\
	\end{pmatrix}
	\begin{pmatrix}
		\calA^\top & I & \calC^\top\\
		\calB_2^\top & 0 & \calD_2^\top\\
		-I & 0 & 0\\
		0 & -I & 0\\
		\calB_1 & 0 & \calD_1\\
		0 & 0 & -I
	\end{pmatrix}
	\label{eq:badlySortedDual}
\end{align}
is positive definite. Rearranging terms again in \eqref{eq:badlySortedDual} yields \eqref{eq:robustSynthesisKYP} and multiplying out \eqref{eq:largePrimal} shows that this constraint is included in \eqref{eq:MultConstraint}.

\end{document}